\newcommand{\UDC}[1]{\vspace{-24pt}\begin{flushleft}\tt #1\end{flushleft}}
\newcommand{\Title}[1]{\begin{flushleft}\Large \bf #1\end{flushleft}}
\newcommand{\Author}[1]{\begin{flushleft}\bfseries\itshape #1\end{flushleft}}
\newcommand{\Address}[1]{\begin{flushleft}\it #1\end{flushleft}}
\newcommand{\Abstract}[1]{\begin{flushright}
\centering\begin{minipage}{148mm}\small #1\end{minipage}\end{flushright}}
\theoremstyle{plain}
\newtheorem{theorem}{\hspace{\parindent}\bf Теорема}
\newtheorem{lemma}{\hspace{\parindent}\bf Лема}
\newtheorem{algorithm}{\hspace{\parindent}\bf Алгоритм}
\theoremstyle{definition}
\newtheorem{remark}{\hspace{\parindent}\bf Зауваження}
\renewenvironment{proof}[1][\textbf{\textit{Доведення}}]{\emph{#1. }}
\begin{document}
\large

\UDC{УДК 517.925.51;\,681.5.03}

\noindent {\large \textbf{О.\,Г. Мазко}} (Ін-т математики НАН України, Київ)

\Title{Еквівалентне перетворення і зважена $H_\infty$-оптимізація лінійних дескрипторних систем}



\vspace{1mm}

\Abstract{The problem of a generalized type of $H_\infty$-control is investigated for a class of admissible descriptor systems with a non-zero initial vector. A generalized performance measure is used, which characterizes the weighted damping level of external and initial disturbances.
A non-degenerate transformation of the system is proposed, which allows to apply known methods for evaluation and the achievement of desired performance measures for conventional lower-order systems. A numerical example of a controlled hydraulic system with three tanks is given.}


\Abstract{Досліджується проблема узагальненого $H_\infty$-керування для класу допустимих дескрипторних систем з ненульовим початковим вектором.
Використовується узагальнений критерій якості, що характеризує зважений рівень гасіння зовнішніх і початкових збурень.
Пропонується невироджене перетворення системи, яке дозволяє застосовувати відомі методи оцінки та досягнення бажаних критеріїв якості для звичайних систем меншого порядку. Наводиться числовий приклад керованої гідравлічної системи з трьома резервуарами.}

\medskip

\textbf{1. Вступ.}
В сучасній теорії керування велика увага приділяється дескрипторним (диференціально-алгебраїчним) системам, які застосовуються при моделюванні руху об'єктів механіки, робототехніки, енергетики, електротехніки, економіки тощо (див., наприклад, \cite{Dai-89,Duan-2010,Feng-2017,Campbell-2019}).
Рівняння руху, входів та виходів керованих об'єктів можуть містити невизначені елементи (параметри, зовнішні збурення, неточності вимірів тощо), які обумовлюють необхідність розв'язання проблем робастної стабілізації та мінімізації впливу обмежених збурень на якість перехідних процесів.

Типовим критерієм якості у задачах $H_\infty$-оптимізації систем з нульовим початковим станом є рівень гасіння зовнішніх збурень, якому відповідає максимальне значення відношення $L_2$-норм векторів керованого виходу об'єкта і збурень. Так, для лінійної дескрипторної системи
\begin{equation}\label{f1-1}
 E\dot{x}=Ax+Bw,\quad z=Cx+Dw,
\end{equation}
дана характеристика збігається з $H_\infty$-нормою матричної передатної функції
$$
\|\mathcal{H}\|_\infty=\sup_{\omega\in {\mathbb R}} \sqrt{\lambda_{\max}(\mathcal{H}^\top (-i\omega)\mathcal{H}(i\omega))},\quad  \mathcal{H}(\lambda)=C(\lambda E-A)^{-1}B+D,
$$
де $x\in {\mathbb R}^n$, $z\in {\mathbb R}^k$ і $w\in {\mathbb R}^s$ --- вектори відповідно стану, контрольованого виходу і входу системи,
$\lambda_{\max}(\cdot)$ --- максимальне власне значення матриці.
На практиці доцільно застосовувати узагальнений критерій якості \cite{Mazko-2016,Mazko-2023}
\begin{equation}\label{f1-2}
J=\sup\limits_{\{w,x_0\}\in \mathcal{W}} \frac{\|z\|_Q}{\sqrt{\|w\|^2_P+x_0^\top X_0x_0}}.
\end{equation}
Тут $\|z\|_Q$ і $\|w\|_P$ --- зважені $L_2$-норми відповідних векторів $z$ і $w$ вигляду
$$ \|z\|_Q=\sqrt{\int_0^\infty z^\top Qz\, dt},\quad \|w\|_P=\sqrt{\int_0^\infty w^\top Pw\, dt},$$
$\mathcal{W}$ --- множина допустимих пар \{$w,x_0$\} системи, для яких виконується нерівність $0<\|w\|^2_P+x_0^\top X_0x_0< \infty$, а $P=P^\top>0$, $Q=Q^\top>0$ і $X_0=E^\top HE$ --- задані вагові матриці, причому $H=H^\top>0$ і початковий вектор $x_0=x(0_-)$ (див. також \cite{Bal-Kog-2010,Feng-2016}). Вираз (\ref{f1-2})  при $x_0\in {\rm Ker}\,E$ позначимо як $J_0$.
Очевидно, що $J_0\le J$. Якщо $P=I_s$ і $Q=I_l$, то $J_0=\|\mathcal{H}\|_\infty$.
Значення $J$ характеризує зважений рівень гасіння зовнішніх збурень, а також початкових збурень, обумовлених ненульовим початковим вектором.

Відомі методи синтезу $H_\infty$-керування використовують верхні оцінки відповідних критеріїв якості, сформульовані у термінах розв'язків квадратичних матричних рівнянь типу Ріккаті та лінійних матричних нерівностей (ЛМН) \cite{Boyd-94,Gahinet-Apkarian-94,Xu-2007}. Для класу лінійних дескрипторних систем такі оцінки встановлено в \cite{Chadli-18,Gao-99,Masubushi-97,Mazko_UMJ-18}. З відомими методами $H_\infty$-оптимізації дескрипторних систем можна ознайомитись, наприклад, в \cite{Chadli-18,Masubushi-97,Feng-2017,Inoue-15,Mazko-2023}.

В даній роботі пропонуються новий підхід до розв'язання узагальненої проблеми $H_\infty$-керування для лінійних дескрипторних систем з критеріями якості типу (\ref{f1-2}) на основі невиродженого перетворення таких систем до звичайних і застосування відомих методів синтезу статичних та динамічних регуляторів. Як наслідок, у ряді випадків відповідні алгоритми синтезу керувань базуються на розв'язанні ЛМН без додаткових рангових обмежень. Зокрема, порядок шуканого динамічного регулятора у таких алгоритмах синтезу не перевищує рангу матриці при похідних вихідної системи.
Також, відмітна особливість отриманих результатів порівняно з відомими полягає у застосуванні зважених критеріїв якості, які дають нові можливості при досягненні бажаних характеристик дескрипторних систем керування. За допомогою вагових коефіцієнтів у таких критеріях якості можна встановити пріоритети між компонентами керованого виходу і невизначених збурень у системі керування. Причому, компонентами невизначених збурень можуть бути як зовнішні збурення, що діють на систему, так і похибки вимірюваного виходу.

Слід зазначити, що для розв'язання ЛМН створено достатньо ефективні комп'ютерні засоби, наприклад, LMI Toolbox системи MATLAB \cite{Gahinet-95}. Для розв'язання ЛМН з ранговими обмеженнями можна застосувати засоби LMIRank i YALMIP системи MATLAB \cite{Orsi-Helmke-Moore-2006,Lofberg-2004} або блок розв'язку (Solve Block) системи Mathcad Prime \cite{Vosk-Zad_2016}.

Будемо використовувати такі позначення:
$I_n$ --- одинична матриця порядку $n$;
$0_{n\times m}$ --- нульова матриця розмірів $n\times m$;
$X=X^\top>0$ ($\ge 0$) --- додатно (невід'ємно) визначена матриця $X$;
$\sigma(A)$ --- спектр матриці $A$;
$A^{-1}$($A^+$) --- обернена (псевдообернена) матриця;
${\rm Ker}\, A$ --- ядро матриці $A$;
$W_A$ --- матриця, стовпці якої утворюють базис ядра ${\rm Ker}\, A$;
$\|w\|_P$ --- зважена $L_2$-норма вектор-функції $w(t)$; ${\mathbb C}^-$ --- відкрита півплощина ${\rm Re \lambda}<0$.

\medskip

\textbf{2. Означення і допоміжні твердження.}
Розглянемо дескрипторну систему (\ref{f1-1}), де ${\rm rank}\,E=r<n$, і її критерій якості (\ref{f1-2}).  Система (\ref{f1-1}) називається \emph{допустимою}, якщо пара матриць \{$E,A$\} \emph{регулярна}, \emph{стійка} і \emph{неімпульсна} \cite{Dai-89}, тобто відповідно $\det F(\lambda)\not\equiv 0$ ($\lambda \in \mathbb{C}$), $\sigma(F)\subset {\mathbb C}^-$ і ${\rm deg}\,\{\det F(\lambda)\}=r $, де $\sigma(F)=\{\lambda_1,\dots,\lambda_\alpha\}$ --- скінченний спектр в'язки матриць $F(\lambda)=A-\lambda E$.
Система (\ref{f1-1}) називається \emph{внутрiшньо стiйкою}, якщо вона стійка у вiдсутностi збурень ($w \equiv 0$).

Пара матриць \{$E,A$\} є регулярною тоді і лише тоді, коли існують невироджені матриці $L$ і $R$, що перетворюють її до канонічної форми Веєрштрасса \cite{Gantmaher}.
Система (\ref{f1-1}) є неімпульсною тоді і лише тоді, коли \cite{Duan-2010}
\begin{equation}\label{f2-1}
{\rm rank}\,\left[\begin{array}{cc}
                  E & 0 \\
                  A & E \\
                \end{array}\right]=n+r.
\end{equation}

Нехай $E=E_1 E_2^\top$ --- скелетна декомпозиція матриці $E$, де $E_1,E_2\in \mathbb{R}^{n\times r}$ --- матриці повного рангу $r$, а $E_1^\bot,E_2^\bot\in \mathbb{R}^{n\times (n-r)}$ --- відповідні ортогональні доповнення, що задовольняють співвідношення
$$E_i^\top E_i^\bot=0,\quad \det\,\left[\begin{array}{cc}
                                   E_i & E_i^\bot \\
                            \end{array}\right]\ne 0,\quad
i=1,2.$$

Виконаємо невироджене перетворення системи (\ref{f1-1}) на основі співвідношень
\begin{equation}\label{f2-2}
 LER=\left[\begin{array}{cc}
          I_r & 0 \\
          0 & 0 \\
        \end{array}\right],\;
  LAR=\left[\begin{array}{cc}
          A_1 & A_2 \\
          A_3 & A_4 \\
        \end{array}\right],\;
        x=R\left[\begin{array}{c}
          \xi_1 \\
          \xi_2 \\
        \end{array}\right],\;  \xi_1\in \mathbb{R}^r, \;  \xi_2\in \mathbb{R}^{n-r},
 \end{equation}
 де
 $$L=\left[\begin{array}{c}
            E_1^+ \\
          E_1^{\bot +} \\
        \end{array}\right],\quad  E_1^+=(E_1^\top E_1)^{-1}E_1^\top,\quad E_1^{\bot+}=(E_1^{\bot\top} E_1^{\bot})^{-1}E_1^{\bot\top}, $$
 $$R=\left[\begin{array}{cc}
       E_2^{+\top} & E_2^{\bot +\top} \\
     \end{array}\right],\quad E_2^+=(E_2^\top E_2)^{-1}E_2^\top,\quad
                E_2^{\bot+}=(E_2^{\bot\top} E_2^{\bot})^{-1}E_2^{\bot\top},$$
$$
A_1=E_1^+AE_2^{+\top},\quad A_2=E_1^+AE_2^{\bot+\top},\quad A_3=E_1^{\bot+}AE_2^{+\top},\quad A_4=E_1^{\bot+}AE_2^{\bot+\top}.
$$
При цьому
$$L^{-1}=\left[\begin{array}{cc}
             E_1 & E_1^\bot\\
          \end{array}\right],\quad
R^{-1}=\left[\begin{array}{c}
 E_2^\top \\
 E_2^{\bot\top} \\
 \end{array}\right],\quad \xi_1=E_2^\top x,\quad \xi_2=E_2^{\bot\top} x.$$

Неважко встановити, що умова (\ref{f2-1}) еквівалентна нерівності $\det\,A_4\ne 0$, тобто
\begin{equation}\label{f2-3}
 \det\,(E_1^{\bot \top}A E_2^{\bot})\ne 0.
\end{equation}
Виключаючи змінну $\xi_2= -A_4^{-1}\big(A_3\xi_1+B_2w\big)$ за умови (\ref{f2-3}), на основі перетворення (\ref{f2-2}) отримаємо звичайну систему
\begin{equation}\label{f2-4}
  \dot{\xi}_1=\bar{A}\xi_1+\bar{B} w,\quad z=\bar{C} \xi_1+\bar{D} w,\quad \xi_1(0)=\xi_{10},
\end{equation}
де
$$\bar{A}=A_1-A_2A_4^{-1}A_3,\; \bar{B}=B_1-A_2A_4^{-1}B_2,\; \bar{C}=C_1-C_2A_4^{-1}A_3,\; \bar{D}=D-C_2A_4^{-1}B_2,$$
$$  LB=\left[\begin{array}{c}
          B_1 \\
          B_2 \\
        \end{array}\right],\quad CR= \left[\begin{array}{cc}
                                         C_1 & C_2 \\
                                       \end{array}\right].$$
Зазначимо, що спектр матриці $\bar{A}$ збігається з $\sigma(F)$, а критерії якості $J_0$ і $J$ неімпульсної системи (\ref{f1-1}) не залежать від $\xi_2$ і визначається системою (\ref{f2-4}), оскільки
$$
\left[\begin{array}{cc}
    I_r & -A_2A_4^{-1} \\
    0 & I_{n-r} \\
  \end{array}\right]LF(\lambda)R
  \left[\begin{array}{cc}
    I_r & 0 \\
    -A_4^{-1}A_3 & I_{n-r} \\
  \end{array}\right]=\left[\begin{array}{cc}
                         \bar{A}-\lambda I_r & 0 \\
                         0 & A_4 \\
                       \end{array}\right],
$$
$$
x_0^\top X_0x_0=\left[\begin{array}{cc}
            \xi_{10}^\top & \xi_{20}^\top\\
         \end{array}\right]R^\top E^\top L^\top L^{-1\top}HL^{-1}LER\left[\begin{array}{c}
                                                                                                  \xi_{10} \\
                                                                                                  \xi_{20} \\
                                                                                                \end{array}\right]=\xi_{10}^\top \bar{H}\xi_{10},$$
де $\bar{H} =E_1^\top H E_1$.

Отже, застосовуючи \cite[лема 4.1]{Mazko-NDST-2017} до системи (\ref{f2-4}), маємо таке твердження.

\begin{lemma}\label{l2-1}
Система {\rm (\ref{f1-1})} є допустимою і $J_0<\gamma$ тоді і лише тоді, коли виконується умова {\rm (\ref{f2-3})} і для деякої матриці $X=X^\top>0$
\begin{equation}\label{f2-5}
{\bar{\Omega}(X)}=\left[\begin{array}{ccc}
                   \bar{A}^\top X+X\bar{A} & X \bar{B} & \bar{C}^\top\\
                   \bar{B}^\top X & -\gamma^2P & \bar{D}^\top \\
                   \bar{C} & \bar{D} & - Q^{-1}\\
                 \end{array}\right]< 0.
\end{equation}
Система {\rm (\ref{f1-1})} допустима і $J<\gamma$ тоді і лише тоді, коли виконується умова {\rm (\ref{f2-3})} і система ЛМН {\rm (\ref{f2-5})} і
\begin{equation}\label{f2-6}
0<X<\gamma^2 \bar{H}
\end{equation}
сумісна щодо $X$.
\end{lemma}

Із леми \ref{l2-1} випливають алгоритми обчислення характеристик $J_0$ і $J$ системи (\ref{f1-1}) на основі розв'язування відповідних оптимізаційних задач з обмеженнями виключно в термінах ЛМН:
$$J_0=\inf \big\{\gamma:\; \bar{\Omega}(X)<0,\; X>0\big\},\quad
 J=\inf \big\{\gamma:\; \bar{\Omega}(X)<0,\; 0<X<\gamma^2\bar{H}\big\}.$$

\medskip
Вектор збурення $w(t)$ і початковий вектор $x_0$ називатимемо \emph{найгіршими} в системі (\ref{f1-1}) щодо критерію якості $J$, якщо на їхніх значеннях в (\ref{f1-2}) досягається супремум, тобто $\|z\|_Q^2=J^2\big(\|w\|^2_P+x_0^\top X_0x_0\big)$. Методи знаходження таких векторів у окремих випадках запропоновано в \cite{Bal-Kog-2010,Mazko-Kusii-18,Mazko-Kotov-UMJ-19}.
Так, якщо система {\rm(\ref{f1-1})} допустима і для деякої матриці $X$ виконуються співвідношення
$$
  A_0^\top X+X^\top A_0+X^\top R_0X+Q_0=0,\quad 0\le E^\top X=X^\top E\le \gamma^2 X_0,
$$
де \;$A_0=A+BR_1^{-1}D^\top QC$, \;$R_0=BR_1^{-1}B^\top $, \;$Q_0=C^\top \!\big(Q+QDR_1^{-1}D^\top Q\big)C$, $R_1=\gamma^2P-D^\top QD>0$ і $\gamma=J$,
то структурований вектор зовнішніх збурень у формі лінійного зворотного зв'язку за станом
\begin{equation*}\label{f2-7}
  w=K_*x,\quad K_*=R_1^{-1}(B^\top X+D^\top QC),
\end{equation*}
і довільний початковий вектор $x_0\in {\rm Ker}\,(E^\top X-J^2 X_0)$ є найгіршими щодо критерію якості $J$ для системи {\rm(\ref{f1-1})} \cite{Mazko-Kotov-UMJ-19}.

Наведемо інший спосіб знаходження найгіршої пари \{$w(t),x_0$\} неімпульної системи (\ref{f1-1}) щодо критерію якості $J$ із застосуванням перетворення (\ref{f2-2}). За умови (\ref{f2-3}) найгірший початковий вектор будуємо у вигляді
\begin{equation}\label{f2-8}
x_0=R\left[\begin{array}{c}
          \xi_{10} \\
          -A_4^{-1}\big(A_3\xi_{10}+B_2w(0)\big) \\
        \end{array}\right],
\end{equation}
де \{$w(t),\xi_{10}$\} --- найгірша пара системи (\ref{f2-4}) щодо критерію якості $J$.

Згідно з лемою Шура умова (\ref{f2-5}) еквівалентна матричній нерівності Ріккаті
\begin{equation}\label{f2-9}
\bar{A}_0^\top X+X\bar{A}_0+X\bar{R}_0X+\bar{Q}_0<0,
\end{equation}
де $\bar{A}_0=\bar{A}+\bar{B}\,\bar{R}_1^{-1}\bar{D}^\top Q\bar{C}$, $\bar{Q}_0=\bar{C}^\top \big(Q+Q\bar{D}\,\bar{R}_1^{-1}\bar{D}^\top Q\big)\bar{C}$, $\bar{R}_0=\bar{B}\,\bar{R}_1^{-1}\bar{B}^\top $ і $\bar{R}_1=\gamma^2P-\bar{D}^\top Q\bar{D}>0$.
Якщо пара матриць \{$\bar{A},\bar{B}$\} керована, пара матриць \{$\bar{A},\bar{C}$\} спостережувана і $J_0<\gamma$, то відповідне матричне рівняння Ріккаті
\begin{equation}\label{f2-10}
\bar{A}_0^\top X+X\bar{A}_0+X\bar{R}_0X+\bar{Q}_0=0
\end{equation}
має розв'язки $X_-$ і $X_+$ такі, що $\sigma(\bar{A}_0+\bar{R}_0 X_\pm)\subset \mathbb{C}^\pm$, $0<X_-<X_+$ і кожний розв'язок нерівності (\ref{f2-9}) належить інтервалу $X_-< X< X_+$ (див. \cite{Zhou-Doyle-Glover-1996,Dullerud-Paganini-2000}). Більш того, якщо $J<\gamma$ ($J\le\gamma$) і $X$ задовольняє рівняння (\ref{f2-10}), то $X<\gamma^2\bar{H}$ ($X\le\gamma^2\bar{H}$). Справді, поклавши $v(\xi_1)=\xi_1^\top X\xi_1$ і
\begin{equation}\label{f2-11}
w=\bar{K}_*\xi_1,\quad \bar{K}_*=\bar{R}_1^{-1}\big(\bar{B}^\top X+\bar{D}^\top Q\bar{C}\big),
\end{equation}
отримаємо рівність $\dot{v}+z^\top Qz-\gamma^2w^\top Pw=0,$
де $\dot{v}$ --- похідна функції $v(\xi_1)$ в силу системи (\ref{f2-4}). Після інтегрування даної рівності на нескінченному інтервалі за умови $J<\gamma$ отримаємо $\|z\|_Q^2-\gamma^2 \|w\|_P^2=\xi_{10}^\top X\xi_{10}< \gamma^2\xi_{10}^ \top \bar{H}\xi_{10}$
для будь-якого $\xi_{10}\ne 0$, інакше $J\ge \gamma$. Якщо ж $J=\gamma$, то за умов (\ref{f2-10}) і (\ref{f2-11}) рівність
$\xi_{10}^\top X\xi_{10}= \gamma^2\xi_{10}^\top \bar{H} \xi_{10}$ або їй еквівалентна $(X-\gamma^2\bar{H})\xi_{10}=0$ можлива при ненульовому значенні $\xi_{10}$. При цьому $\|z\|_Q^2=J^2(\|w\|_P^2+\xi_{10}^\top \bar{H}\xi_{10})$, тобто в (\ref{f1-2}) досягається супремум.
У випадку $x_0\in {\rm Ker}\,E$ за умов (\ref{f2-10}) і (\ref{f2-11}) маємо $\xi_{10}=0$ і $\|z\|_Q=J_0\|w\|_P$, тобто (\ref{f2-11}) є найгіршим збуренням щодо критерію якості $J_0$.

Отже, виконується таке твердження.

\begin{lemma}\label{l2-2}
Нехай $X>0$ --- стабілізуючий розв'язок рівняння Ріккаті {\rm(\ref{f2-10})} за умов {\rm(\ref{f2-3})} і $\gamma=J$. Тоді структурований вектор зовнішніх збурень {\rm(\ref{f2-11})}, де $\xi_1=\xi_1(t,\xi_{10})$ --- розв'язок системи
\begin{equation}\label{f2-12}
\dot{\xi}_1=(\bar{A}+\bar{B}\bar{K}_*)\xi_1,\quad \xi_1(0)=\xi_{10},
\end{equation}
і довільний вектор {\rm(\ref{f2-8})} при $\xi_{10}\in {\rm Ker}\,(X-J^2 \bar{H})$ є найгіршими щодо критерію якості $J$ для системи {\rm(\ref{f1-1})}.

Якщо $X>0$ --- стабілізуючий розв'язок рівняння {\rm(\ref{f2-10})} за умов {\rm(\ref{f2-3})} і $\gamma=J_0$, а $\xi_1=\xi_1(t,\xi_{10})$ --- розв'язок системи {\rm(\ref{f2-12})} при $\xi_{10}=0$, то {\rm(\ref{f2-11})} є найгіршим збуренням щодо критерію якості $J_0$ для системи {\rm(\ref{f1-1})}.
\end{lemma}

\medskip
\textbf{3. Основні результати.}
Розглянемо дескрипторну систему керування
\begin{equation}\label{f3-1}
   E\dot{x}=Ax+B_1w+B_2u,\quad
    z=C_1x+D_{11}w+D_{12}u,\quad
    y=C_2x+D_{21}w+D_{22}u,
\end{equation}
де $x\in {\mathbb R}^n$, $u\in {\mathbb R}^{m}$, $w\in {\mathbb R}^{s}$, $z\in {\mathbb R}^{k}$ і $y\in {\mathbb R}^{l}$ --- вектори відповідно стану, керування, зовнішніх збурень, контрольованого і спостережуваного виходів. Всі матричні коефіцієнти в (\ref{f3-1}) сталі, причому пара \{$E,A$\} регулярна, неімпульсна і ${\rm rank}\,E=r< n$. Компонентами вектора $w(t)$ можуть бути як зовнішні збурення, що діють на систему, так i похибки вимiрюваного виходу. Даний вектор має бути обмеженим за зваженою $L_2$-нормою. Початкові збурення в системі обумовлені невідомим початковим вектором $x_0=x(0_-)$.

Нас цікавлять стабілізуючі закони керування, які гарантують внутрішню стійкість замкненої системи та бажані верхні оцінки її критеріїв якості (\ref{f1-2}) стосовно контрольованого виходу $z$. Статичні й динамічні регулятори, які мінімізують критерію якості $J$, називатимемо $J$-\emph{оптимальними}. Пошук $J_0$- та $J$-оптимальних регуляторів можна здійснювати на основі досягнення відповідних оцінок $J_0<\gamma$ та $J<\gamma$ при мінімально можливому значенні $\gamma$.

При дослідженні класу систем (\ref{f3-1}) використовуються такі їхні властивості, як $C$-, $R$- та $I$-\emph{керованість}, а також двоїсті до них $C$-, $R$- та $I$-\emph{спостережуваність} \cite{Feng-2017}. Зокрема, для розв'язання узагальнених задач $H_\infty$-оптимізації необхідно, щоб трійка матриць \{$E,A,B_2$\} була стабілізовною та $I$-керованою. Це означає, що повинна існувати така матриця $K$, щоб пара матриць \{$E,A+B_2K$\} була стійкою і неімпульсною, тобто допустимою. Критеріями $I$-керованості трійки \{$E,A,B_2$\} та $I$-спостережуваності трійки \{$E,A,C_2$\} є відповідні рівності \cite{Cobb-84}
\begin{equation}\label{f3-1-1}
{\rm rank}\,\left[\begin{array}{ccc}
                E & 0 & 0 \\
                A & E & B_2 \\
              \end{array}\right]=n+r,\quad
{\rm rank}\,\left[\begin{array}{cc}
                E & A \\
                0 & E \\
                0 & C_2 \\
              \end{array}\right]=n+r.
\end{equation}

Застосуємо до системи (\ref{f3-1}) еквівалентне перетворення (\ref{f2-2}). Виключаючи змінну $\xi_2= -A_4^{-1}\big(A_3\xi_1+B_{12}w+B_{22}u\big)$ за умови (\ref{f2-3}), отримаємо звичайну систему
 \begin{equation}\label{f3-2}
    \dot{\xi}_1=  \bar{A}\xi_1+ \bar{B}_1w+\bar{B}_2u,\quad
    z=\bar{C}_1\xi_1+\bar{D}_{11}w+\bar{D}_{12}u, \quad
    y=\bar{C}_2\xi_1+\bar{D}_{21}w+\bar{D}_{22}u,
\end{equation}
де
$$\bar{A}=A_1-A_2A_4^{-1}A_3,\quad \bar{B}_1=B_{11}-A_2A_4^{-1}B_{12},\quad \bar{B}_2=B_{21}-A_2A_4^{-1}B_{22},$$
$$\bar{C}_1=C_{11}-C_{12}A_4^{-1}A_3,\quad \bar{D}_{11}=D_{11}-C_{12}A_4^{-1}B_{12},\quad \bar{D}_{12}=D_{12}-C_{12}A_4^{-1}B_{22},$$
$$\bar{C}_2=C_{21}-C_{22}A_4^{-1}A_3,\quad \bar{D}_{21}=D_{21}-C_{22}A_4^{-1}B_{12},\quad \bar{D}_{22}=D_{22}-C_{22}A_4^{-1}B_{22},$$
$$ LB_1=\left[\begin{array}{c}
          B_{11} \\
          B_{12} \\
        \end{array}\right],\quad LB_2=\left[\begin{array}{c}
          B_{21} \\
          B_{22} \\
        \end{array}\right],\quad C_1R=\left[\begin{array}{cc}
                                         C_{11} & C_{12} \\
                                       \end{array}\right],\quad C_2R=\left[\begin{array}{cc}
                                         C_{21} & C_{22} \\
                                       \end{array}\right].$$
В означенні критерію якості (\ref{f1-2}) даної системи використовуємо вираз $x_0^\top X_0x_0=\xi_{10}^\top \bar{H}\xi_{10}$, де $\xi_{10}=\xi_1(0)$, $\bar{H}=E_1^\top H E_1$ (див. попередній розділ).

Таким чином, задачі $J_0$- та $J$-оптимізації дескрипторної системи (\ref{f3-1}) з неімпульсною парою \{$E,A$\} зводяться до розв'язання аналогічних задач для звичайної системи (\ref{f3-2}).

\medskip
\textbf{3.1. Статичний регулятор.} Для системи (\ref{f3-2}) побудуємо статичний регулятор за виходом
\begin{equation}\label{f3-3}
 u=Ky,\quad K\in \mathbb{R}^{m\times l}.
\end{equation}
Замкнена система за умови $\det(I_m-K\bar{D}_{22})\ne 0$ має вигляд
\begin{equation}\label{f3-3-1}
 \dot{\xi}_1=A_*\xi_1+B_*w,\quad z=C_*\xi_1+D_*w,
\end{equation}
де $A_*=\bar{A}+\bar{B}_2K_0\bar{C}_2$, $B_*=\bar{B}_1+\bar{B}_2K_0\bar{D}_{21}$, $C_*=\bar{C}_1+\bar{D}_{12}K_0\bar{C}_2$, $D_*=\bar{D}_{11}+\bar{D}_{12}K_0\bar{D}_{21}$ і $K_0=(I_m-K\bar{D}_{22})^{-1}K$. Керування (\ref{f3-3}) будемо застосовувати також для вихідної системи (\ref{f3-1}).

Матричну нерівність (\ref{f2-5}) у лемі \ref{l2-1} для системи (\ref{f3-3-1}) перепишемо як ЛМН щодо $K_0$:
\begin{equation}\label{f3-7}
  L_0^\top K_0R_0+R_0^\top K_0^\top L_0+\Omega<0,
\end{equation}
де
$$R_0=\left[\begin{array}{ccc}
          \bar{C}_2 & \bar{D}_{21} & 0_{l\times k} \\
        \end{array}\right],\quad L_0=\left[\begin{array}{ccc}
          \bar{B}^\top_2X & 0_{m\times s} & \bar{D}^\top_{12} \\
        \end{array}\right],$$$$
\Omega=\left[\!\begin{array}{ccc}
         \bar{A}^\top X+X\bar{A} & X\bar{B}_1 & \bar{C}_1^\top  \\
         \bar{B}_1^\top X & \!\!-\gamma^2P& \bar{D}_{11}^\top  \\
         \bar{C}_1 & \bar{D}_{11} & \!\!-Q^{-1} \\
         \end{array}\!\right]\!.$$

На основі леми \ref{l2-1} і теореми 5.1 із \cite{Mazko-2023} маємо наступний результат.

 \begin{theorem}\label{t3-1}
Для системи {\rm (\ref{f3-1})} існує статичний регулятор {\rm (\ref{f3-3})}, при якому замкнена система є допустимою і її критерій якості $J<\gamma$, тоді і лише тоді, коли сумісна стосовно $X$ і $Y$ система співвідношень {\rm(\ref{f2-6})} і
\begin{equation}\label{f3-4}
   W_{\bar{R}}^\top \left[\begin{array}{cc}
                 \bar{A}^\top X+X\bar{A}+\bar{C}_1^\top Q\bar{C}_1 & X\bar{B}_1+\bar{C}_1^\top Q\bar{D}_{11} \\
                 \bar{B}_1^\top X+\bar{D}_{11}^\top Q\bar{C}_1 & \bar{D}_{11}^\top Q\bar{D}_{11}-\gamma^2P \\
               \end{array} \right]W_{\bar{R}}<0,
  \end{equation}
\begin{equation}\label{f3-5}
  W_{\bar{L}}^\top \left[\begin{array}{cc}
                 \bar{A}Y+Y\bar{A}^\top +\bar{B}_1P^{-1}\bar{B}_1^\top  & Y\bar{C}_1^\top +\bar{B}_1P^{-1}\bar{D}_{11}^\top  \\
                 \bar{C}_1Y+\bar{D}_{11}P^{-1}\bar{B}_1^\top  & \bar{D}_{11}P^{-1}\bar{D}_{11}^\top -\gamma^2Q^{-1} \\
               \end{array} \right]W_{\bar{L}}<0,
\end{equation}
\begin{equation}\label{f3-6}
  W=\left[\begin{array}{cc}
        X & \gamma I_r \\
        \gamma I_r & Y
      \end{array}
\right]\ge 0,\quad {\rm rank}\,W=r,
\end{equation}
де $\bar{R}=\left[\begin{array}{cc}
                \bar{C}_2 & \bar{D}_{21} \\
              \end{array}\right]$, $\bar{L}=\left[\begin{array}{cc}
                \bar{B}_2^\top & \bar{D}_{12}^\top \\
              \end{array}\right]$.
Матрицю такого регулятора можна знайти у вигляді $K=K_0(I_l+\bar{D}_{22}K_0)^{-1}$,
де $K_0$ --- розв'язок ЛМН {\rm (\ref{f3-7})}.
 \end{theorem}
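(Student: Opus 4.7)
\emph{Proof.} The plan is to reduce the synthesis problem for the descriptor system (\ref{f3-1}) to the corresponding $H_\infty$ output-feedback problem for the non-degenerate reduced system (\ref{f3-2}), to recast the closed-loop performance inequality as an inequality affine in the unknown gain $K_0$, and finally to eliminate $K_0$ via the projection (elimination) lemma. The two decoupled conditions obtained in that way are then glued together by the matrix coupling (\ref{f3-6}).

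First, I would apply the transformation (\ref{f2-2}) to the closed-loop system (\ref{f3-1}), (\ref{f3-3}). Since the non-degeneracy criterion (\ref{f2-3}) depends only on $E$ and $A$, it is unaffected by output feedback. Hence, by Lemma~\ref{l2-1} applied to (\ref{f3-3-1}), the closed-loop descriptor system is admissible with $J<\gamma$ iff there exists $X=X^\top>0$ satisfying (\ref{f2-6}) and $\bar{\Omega}_{*}(X)<0$, where $\bar{\Omega}_{*}(X)$ is the matrix (\ref{f2-5}) constructed from $A_{*},B_{*},C_{*},D_{*}$. Substituting these closed-loop matrices, which are affine in $K_0$, into $\bar{\Omega}_{*}(X)$ and collecting the terms containing $K_0$ yields precisely the bilinear matrix inequality (\ref{f3-7}) with $R_0,L_0,\Omega$ as defined in the statement.

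Second, I would apply the projection lemma to (\ref{f3-7}) in the variable $K_0$: a solution $K_0$ exists iff $W_{R_0}^{\top}\Omega W_{R_0}<0$ and $W_{L_0}^{\top}\Omega W_{L_0}<0$. Choosing $W_{R_0}=\mathrm{diag}(W_{\bar{R}},I_k)$ and Schur-complementing the $-Q^{-1}$ block of the first inequality reproduces (\ref{f3-4}) directly. For the dual one notes that $\ker L_0=\mathrm{diag}(X^{-1},I_s,I_k)\,\ker[\bar{B}_2^{\top}\;0\;\bar{D}_{12}^{\top}]$; performing a congruence by $\mathrm{diag}(X^{-1},I,I)$, introducing the new variable $Y$ via the relation $XY=\gamma^2 I_r$, and Schur-complementing the $-\gamma^2 P$ block delivers (\ref{f3-5}) with the annihilator $W_{\bar{L}}$ in place. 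A further Schur complement on the positive block $X$ shows that the compatibility $XY=\gamma^2 I_r$ together with $X,Y>0$ is equivalent to (\ref{f3-6}). The explicit formula $K=K_0(I_l+\bar{D}_{22}K_0)^{-1}$ follows by inverting the substitution $K_0=(I_m-K\bar{D}_{22})^{-1}K$ used to pass from (\ref{f3-3}) to (\ref{f3-3-1}).

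The main obstacle I expect is the bookkeeping in the dual projection step: one has to track carefully the congruence by $\mathrm{diag}(X^{-1},I,I)$ and confirm that it turns the kernel condition for $L_0$ precisely into (\ref{f3-5}), and that the $\gamma$-scaling encoded in (\ref{f3-6}) is exactly the one dictated by the change of variable from $X$ to $Y$. Once these manipulations are verified, necessity and sufficiency of (\ref{f2-6}), (\ref{f3-4})--(\ref{f3-6}) follow immediately from Lemma~\ref{l2-1} and the projection lemma, and the regulator is recovered from any $K_0$ solving the affine inequality (\ref{f3-7}).
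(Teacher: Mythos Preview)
Your proposal is correct and follows essentially the same route as the paper. The paper's own proof is a one-line appeal to Lemma~\ref{l2-1} (to pass from the descriptor closed loop to the reduced closed loop (\ref{f3-3-1}) and the LMI (\ref{f3-7}) with the extra constraint (\ref{f2-6})) together with Theorem~5.1 of \cite{Mazko-2023}, which is exactly the projection/elimination lemma argument you spell out for the static output-feedback $H_\infty$ problem; your detailed bookkeeping with the congruence $\mathrm{diag}(X^{-1},I,I)$, the substitution $Y=\gamma^2 X^{-1}$, and the Schur complements is precisely what that cited theorem encapsulates.

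One minor remark on presentation: rather than ``applying (\ref{f2-2}) to the closed-loop system'' and arguing that (\ref{f2-3}) is unaffected by feedback, the paper first reduces (\ref{f3-1}) to (\ref{f3-2}) under the standing impulse-freeness assumption on $\{E,A\}$ and only then closes the loop to obtain (\ref{f3-3-1}); this avoids any question about how feedback interacts with (\ref{f2-3}) and makes the application of Lemma~\ref{l2-1} to the ordinary system (\ref{f3-3-1}) immediate.
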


 Зазначимо, що співвідношення (\ref{f3-6}) виконуються тоді і лише тоді, коли
 \begin{equation}\label{f3-6-2}
   X=X^\top>0,\quad Y=Y^\top>0,\quad XY=\gamma^2 I_r.
 \end{equation}

 Наведемо наслідки леми \ref{l2-1} і теореми \ref{t3-1} за додаткових умов
\begin{equation}\label{f3-6-3}
{\rm rank}\,\bar{C}_2=r\le l,\quad \bar{D}_{21}=0,\quad \bar{D}_{22}=0,
\end{equation}
\begin{equation}\label{f3-6-4}
\bar{D}_{11}^\top Q\bar{D}_{11}<\gamma^2P.
\end{equation}
Умови (\ref{f3-6-3}) виконуються, якщо, наприклад,
$${\rm rank}\,(C_2E_2)=r,\quad C_2E_2^\bot=0,\quad D_{21}=0,\quad D_{22}=0.$$

\begin{theorem}\label{t3-2}
 За умов {\rm (\ref{f3-6-3})} і {\rm (\ref{f3-6-4})} наступні твердження еквівалентні:

{\rm 1)} для системи {\rm (\ref{f3-1})} існує статичний регулятор {\rm (\ref{f3-3})},
при якому замкнена система є допустимою і її критерій якості  $J<\gamma $;

{\rm 2)} існує матриця $Y>\bar{H}^{-1}$, що задовольняє ЛМН {\rm (\ref{f3-5})};

{\rm 3)} існують матриці $Y>\bar{H}^{-1}$ і $Z$, що задовольняють ЛМН
\begin{equation}\label{f3-6-5}
  \left[\begin{array}{ccc}
      \gamma^2(\bar{A}Y+Y\bar{A}^\top\! +\bar{B}_2Z+Z^\top\! \bar{B}_2^\top) & \gamma^2\bar{B}_1 & Y\bar{C}_1^\top+Z^\top\! \bar{D}_{12}^\top \\
      \gamma^2\bar{B}_1^\top & -\gamma^2 P & \bar{D}_{11}^\top \\
      \bar{C}_1Y+\bar{D}_{12}Z & \bar{D}_{11} & -Q^{-1} \\
    \end{array}\right]<0.
\end{equation}

Якщо виконується твердження {\rm 2}, то матрицю $K=K_0$ регулятора {\rm (\ref{f3-3})} у твердженні {\rm 1}  можна знайти як розв'язок ЛМН {\rm (\ref{f3-7})} при $X=\gamma^2 Y^{-1}$. При виконанні твердження {\rm 3} матрицею такого регулятора може бути довільний розв'язок лінійного рівняння  $K\bar{C}_2Y=Z$.
 \end{theorem}
\begin{proof}
Враховуючи умови (\ref{f3-6-3}),
маємо $y=\bar{C}_2\xi_1=\bar{C}_2E_2^\top x$ і $l\ge r$.
Еквівалентність тверджень 1 і 2 випливає з теореми \ref{t3-1}, оскільки за умов (\ref{f3-6-3})
$W_{\bar{R}}=\left[\begin{array}{cc}
         0 & I_s \\
       \end{array}\right]^\top $. При цьому нерівність (\ref{f3-4}) не залежить від $X$ і має вигляд (\ref{f3-6-4}). Шукана матриця в (\ref{f3-5}) згідно з (\ref{f3-6-2}) має вигляд $Y=\gamma^2X^{-1}$. Тому замість (\ref{f2-6}) маємо еквівалентну умову $Y>\bar{H}^{-1}$.
Оскільки $\bar{D}_{22}=0$, то матрицею $K$ регулятора (\ref{f3-3}), що задовольняє твердження 1, може бути довільний розв'язок $K_0$ ЛМН (\ref{f3-7}).

Еквівалентність тверджень 1 і 3 є наслідком леми \ref{l2-1} для замкненої системи (\ref{f3-3-1}), де $K_0=K$. Матрична нерівність (\ref{f3-6-5}) у твердженні 3 виникає в результаті множення першої блочної стрічки зліва і першого блочного стовпця справа виразу (\ref{f3-7}) на $Y=\gamma^2X^{-1}$ з урахуванням умов (\ref{f3-6-3}) і позначення добутку $Z=K\bar{C}_2Y$. Останнє співвідношення за умов (\ref{f3-6-3}) можна розв'язати стосовно $K$:
$$
K=\left\{\begin{array}{ll}
             Z(\bar{C}_2Y)^{-1}, & l=r, \\
             ZY^{-1}\bar{C}_2^++T\bar{C}_2^{\bot \top}, & l>r,
           \end{array}
   \right.
$$
де $T\in \mathbb{R}^{m\times (l-r)}$ --- довільна матриця.

Теорему доведено.
\end{proof}

\medskip

Розглянемо випадок, коли пара матриць \{$E,A$\} у системі (\ref{f3-1}) є імпульною, але існує така матриця $K_1\in \mathbb{R}^{m\times l}$, що
\begin{equation}\label{f3-6-6}
\det\,(I_m-K_1D_{22})\ne 0,\quad \det\big[E_1^{\bot \top}(A+B_2K_{10}C_2)E_2^{\bot}\big]\ne 0,
\end{equation}
де $K_{10}=K_{11}K_1$ і $K_{11}=(I_m-K_1D_{22})^{-1}$.
Можна встановити, що за умов (\ref{f3-6-6}) виконуються рангові співвідношення (\ref{f3-1-1}), тобто система
(\ref{f3-1}) є $I$-керованою та $I$-спостережуваною.

За наведених припущень замість (\ref{f3-3}) застосовуємо регулятор $u=K_1y+v$, де $v$ --- нове керування системи
\begin{equation}\label{f3-6-7}
    E\dot{x}=\widetilde{A}x+\widetilde{B}_1w+\widetilde{B}_2v,\quad
    z=\widetilde{C}_1x+\widetilde{D}_{11}w+\widetilde{D}_{12}v, \quad
    y=\widetilde{C}_2x+\widetilde{D}_{21}w+\widetilde{D}_{22}v.
\end{equation}
Тут за умови (\ref{f3-6-6}) пара матриць \{$E,\widetilde{A}$\} є неімпульсною,
$$\widetilde{A}=A+B_2K_{10}C_2,\quad \widetilde{B}_1=B_1+B_2K_{10}D_{21}, \quad \widetilde{B}_2=B_2K_{11},$$
$$\widetilde{C}_1=C_1+D_{12}K_{10}C_2,\quad \widetilde{D}_{11}=D_{11}+D_{12}K_{10}D_{21},\quad \widetilde{D}_{12}=D_{12}K_{11},$$
$$\widetilde{C}_2=C_2+D_{22}K_{10}C_2,\quad \widetilde{D}_{21}=D_{21}+D_{22}K_{10}D_{21},\quad \widetilde{D}_{22}=D_{22}K_{11}.$$
 Виконавши еквівалентне перетворення системи (\ref{f3-6-7}) на основі співвідношень
$$LER=\left[\begin{array}{cc}
          I_r & 0 \\
          0 & 0 \\
        \end{array}\right],\quad
  L\widetilde{A}R=\left[\begin{array}{cc}
          \widetilde{A}_1 & \widetilde{A}_2 \\
          \widetilde{A}_3 & \widetilde{A}_4 \\
        \end{array}\right],\quad L\widetilde{B}_1=\left[\begin{array}{c}
          \widetilde{B}_{11} \\
          \widetilde{B}_{12} \\
        \end{array}\right],\quad L \widetilde{B}_2=\left[\begin{array}{c}
          \widetilde{B}_{21} \\
         \widetilde{B}_{22} \\
        \end{array}\right],$$
  $$\widetilde{C}_1R=\left[\begin{array}{cc}
                                         \widetilde{C}_{11} & \widetilde{C}_{12} \\
                                       \end{array}\right],\quad \widetilde{C}_2R=\left[\begin{array}{cc}
                                         \widetilde{C}_{21} & \widetilde{C}_{22} \\
                                       \end{array}\right],$$
        $$
        x=R\left[\begin{array}{c}
          \xi_1 \\
          \xi_2 \\
        \end{array}\right],\quad \xi_1=E_2^\top x,\quad  \xi_2= -\widetilde{A}_4^{-1}\big(\widetilde{A}_3\xi_1+\widetilde{B}_{12}w+\widetilde{B}_{22}v\big),$$
де $L$ і $R$ --- невироджені матриці, визначені в (\ref{f2-2}), можна сформулювати аналоги теорем \ref{t3-1} і \ref{t3-2} із застосуванням статичного регулятора $v=Ky$ для звичайної системи
 \begin{equation}\label{f3-2-1}
    \dot{\xi}_1=  \bar{A}\xi_1+ \bar{B}_1w+\bar{B}_2v,\quad
    z=\bar{C}_1\xi_1+\bar{D}_{11}w+\bar{D}_{12}v, \quad
    y=\bar{C}_2\xi_1+\bar{D}_{21}w+\bar{D}_{22}v,
\end{equation}
де
$$\bar{A}=\widetilde{A}_1-\widetilde{A}_2{\widetilde{A}_4}^{-1}\widetilde{A}_3,\quad \bar{B}_1=\widetilde{B}_{11}-\widetilde{A}_2\widetilde{A}_4^{-1}\widetilde{B}_{12},\quad \bar{B}_2=\widetilde{B}_{21}-\widetilde{A}_2\widetilde{A}_4^{-1}\widetilde{B}_{22},$$
$$\bar{C}_1=\widetilde{C}_{11}-\widetilde{C}_{12}\widetilde{A}_4^{-1}\widetilde{A}_3,\quad \bar{D}_{11}=\widetilde{D}_{11}-\widetilde{C}_{12}\widetilde{A}_4^{-1}\widetilde{B}_{12},\quad \bar{D}_{12}=\widetilde{D}_{12}-\widetilde{C}_{12}\widetilde{A}_4^{-1}\widetilde{B}_{22},$$
$$\bar{C}_2=\widetilde{C}_{21}-\widetilde{C}_{22}\widetilde{A}_4^{-1}\widetilde{A}_3,\quad \bar{D}_{21}=\widetilde{D}_{21}-\widetilde{C}_{22}\widetilde{A}_4^{-1}\widetilde{B}_{12},\quad \bar{D}_{22}=\widetilde{D}_{22}-\widetilde{C}_{22}\widetilde{A}_4^{-1}\widetilde{B}_{22}.$$

 В результаті вихідна система (\ref{f3-1}) з керуванням
$$u=\big(K_{10}C_2+K_{11}K_0\bar{C}_2E_2^\top\big)x+\big(K_{10}D_{21}+K_{11}K_0\bar{D}_{21}\big)w$$
 набуває вигляду
\begin{equation}\label{f3-6-8}
 E\dot{x}=A_0x+B_0w,\quad z=C_0x+D_0w,
\end{equation}
де \;$K_0=(I_m-K\bar{D}_{22})^{-1}K$,\; $\det (I_m-K\bar{D}_{22})\ne 0$,
$$A_0=A+B_2\big(K_{10}C_2+K_{11}K_0\bar{C}_2E_2^\top\big),\quad B_0=B_1+B_2\big(K_{10}D_{21}+K_{11}K_0\bar{D}_{21}\big),$$
$$C_0=C_1+D_{12}\big(K_{10}C_2+K_{11}K_0\bar{C}_2E_2^\top\big),\quad D_0=D_{11}+D_{12}\big(K_{10}D_{21}+K_{11}K_0\bar{D}_{21}\big).$$

\medskip
\textbf{3.1. Динамічний регулятор.} Застосовуючи для звичайної системи (\ref{f3-2}) динамічний регулятор
\begin{equation}\label{f3-8}
  \dot{\eta}=Z\eta+Vy,\quad u=U\eta+Ky,\quad \eta(0)=0,
\end{equation}
замкнена система у розширеному фазовому просторі $\mathbb{R}^{r+p}$ має вигляд
\begin{equation}\label{f3-8-1}
  \dot{\widehat{x}}=\widehat{A}_*\widehat{x}+\widehat{B}_*w,\quad z=\widehat{C}_*\widehat{x}+\widehat{D}_*w,\quad \widehat{x}(0)=\widehat{x}_0,
\end{equation}
де
$$
 \widehat{A}_*=\widehat{A}+\widehat{B}_2\widehat{K}_0\widehat{C}_2,\quad
 \widehat{B}_*=\widehat{B}_1+\widehat{B}_2\widehat{K}_0\widehat{D}_{21},
 $$$$
 \widehat{C}_*=\widehat{C}_1+\widehat{D}_{12}\widehat{K}_0\widehat{C}_2,\quad
 \widehat{D}_*=\widehat{D}_{11}+\widehat{D}_{12}\widehat{K}_0\widehat{D}_{21},
$$
$$
\widehat{x}=\left[\begin{array}{c}
      \xi_1 \\
      \eta \\
    \end{array}\right],\quad
\widehat{A}=\left[\!\begin{array}{cc}
                  \bar{A} & 0_{r\times p} \\
                  0_{p\times r} & 0_{p\times p} \\
                \end{array}\!\!\right],\quad
\widehat{B}_1=\left[\begin{array}{c}
               \bar{B}_1 \\
              0_{p\times s} \\
             \end{array}\right],\quad
                \widehat{B}_2=\left[\!\begin{array}{cc}
                  \bar{B}_2 & 0_{r\times p} \\
                  0_{p\times m} & I_p \\
                \end{array}\!\!\right],
$$
$$\widehat{C}_1=\left[\begin{array}{cc}
                    \bar{C}_1 & 0_{k\times p} \\
                  \end{array}\right],\quad \widehat{D}_{11}=\bar{D}_{11},\quad
\widehat{D}_{12}=\left[\begin{array}{cc}
                     \bar{D}_{12} & 0_{k\times p} \\
                   \end{array}\right],$$
$$\widehat{C}_2=\left[\!\begin{array}{cc}
                    \bar{C}_2 & 0_{l\times p} \\
                   0_{p\times r} & I_p  \\
                \end{array}\!\!\right],\quad \widehat{D}_{21}=\left[\begin{array}{c}
                      \bar{D}_{21} \\
                     0_{p\times s} \\
                     \end{array}\right],\quad \widehat{K}_0=\left[\begin{array}{cc}
                  K_0 & U_0 \\
                  V_0 & Z_0 \\
                \end{array}\right],$$
$$K_0=(I_m-K\bar{D}_{22})^{-1}K,\quad U_0=(I_m-K\bar{D}_{22})^{-1}U, $$
$$V_0=V(I_l-\bar{D}_{22}K)^{-1},\quad Z_0=Z+V\bar{D}_{22}(I_m-K\bar{D}_{22})^{-1}U.$$

Визначимо критерій якості $\widehat{J}$ системи (\ref{f3-8-1}) виду (\ref{f1-2}) з ваговими матрицями $P$, $Q$ і $\widehat{X}_0$, де $\widehat{X}_0$ --- деяка блокова матриця розміру $(r+p)\times (r+p)$, першим діагональним блоком якої є $\bar{H}$.
Оскільки початковий вектор регулятора (\ref{f3-8}) нульовий, то значення $\widehat{J}$ збігається з $J$.

 \begin{lemma} {\rm \cite{Mazko-NDST-2017}}. \label{l3-1}
Для заданих додатно визначених матриць $X,Y\in R^{r\times r}$ і числа $\gamma>0$ існують матриці $X_1\in \mathbb{R}^{p\times r}$, $X_2\in \mathbb{ R}^{p\times p}$, $Y_1\in \mathbb{R}^{p\times r}$ і $Y_2\in \mathbb{R}^{p\times p}$, що задовольняють співвідношення
\begin{equation}\label{f3-7-1}
\widehat{X}=\left[\!\begin{array}{cc}
                  X & \!X_1^\top   \\
                  X_1 & \!X_2 \\
                  \end{array}
                  \!\right]>0,\quad  \widehat{Y}=\left[\!\begin{array}{cc}
                  Y & \!Y_1^\top   \\
                  Y_1 & \!Y_2 \\
                  \end{array}
                  \!\right]>0,\quad \widehat{X}\widehat{Y}=\gamma^2I_{r+p},
\end{equation}
тоді і лише тоді, коли
 \begin{equation}\label{f3-9}
   W=\left[\begin{array}{cc}
        X & \gamma I_r \\
        \gamma I_r & Y
      \end{array}
\right]\ge 0,\quad {\rm rank}\,W\le r+p.
 \end{equation}
  \end{lemma}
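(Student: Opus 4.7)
The plan is to reduce both directions to a single Schur-complement computation together with one positive-semidefinite factorization. The constraint $\widehat{X}\widehat{Y}=\gamma^2 I_{r+p}$ forces $\widehat{Y}=\gamma^2\widehat{X}^{-1}$, so positive definiteness of one of these matrices is equivalent to positive definiteness of the other; the problem reduces to extending a given $r\times r$ positive definite block to an $(r+p)\times(r+p)$ positive definite matrix whose inverse has a prescribed $r\times r$ leading principal block.

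For the necessity direction, I would start from $\widehat{Y}=\gamma^2\widehat{X}^{-1}$ and apply the block inversion formula. Since $\widehat{X}>0$ forces $X_2>0$ and the Schur complement $S=X-X_1^\top X_2^{-1}X_1>0$, the $(1,1)$ block of $\gamma^2\widehat{X}^{-1}$ equals $\gamma^2 S^{-1}$, hence $Y=\gamma^2 S^{-1}$ and
$$X-\gamma^2 Y^{-1}=X_1^\top X_2^{-1}X_1\ge 0,\qquad \mathrm{rank}\,(X-\gamma^2 Y^{-1})\le p.$$
The Schur complement identity applied to $W$ with pivot block $Y>0$ then yields $W\ge 0$ together with $\mathrm{rank}\,W=r+\mathrm{rank}\,(X-\gamma^2 Y^{-1})\le r+p$.

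For sufficiency, I would first check that $W\ge 0$ and $\gamma>0$ already imply $X>0$ and $Y>0$: certainly $X,Y\ge 0$ from the diagonal blocks, and if $Xv_1=0$ for some nonzero $v_1$, then taking $v_2=-\varepsilon v_1$ yields
$$v_1^\top X v_1+2\gamma v_1^\top v_2+v_2^\top Y v_2=-2\gamma\varepsilon\,v_1^\top v_1+\varepsilon^2 v_1^\top Y v_1<0$$
for sufficiently small $\varepsilon>0$, contradicting $W\ge 0$. Once $X,Y>0$ are known, the Schur complement gives $Y-\gamma^2 X^{-1}\ge 0$ with rank at most $p$, so there exists a factorization $Y-\gamma^2 X^{-1}=NN^\top$ with $N\in\mathbb{R}^{r\times p}$. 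Setting
$$\widehat{Y}=\begin{pmatrix} Y & N \\ N^\top & I_p\end{pmatrix},\qquad \widehat{X}=\gamma^2\widehat{Y}^{-1},$$
the Schur complement of $I_p$ in $\widehat{Y}$ equals $Y-NN^\top=\gamma^2 X^{-1}>0$, so $\widehat{Y}>0$ and hence $\widehat{X}>0$, with $\widehat{X}\widehat{Y}=\gamma^2 I_{r+p}$ by construction. The block inversion formula then gives the $(1,1)$ block of $\widehat{X}$ as $\gamma^2(Y-NN^\top)^{-1}=X$, so the extension has exactly the required form with $Y_1=N^\top$ and $Y_2=I_p$.

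I expect the main obstacle to be not any individual calculation but the bookkeeping that links three size parameters: the rank-at-most-$(r+p)$ bound on the $2r\times 2r$ matrix $W$, the rank-at-most-$p$ bound on the Schur complement $Y-\gamma^2 X^{-1}$, and the requirement that the factor $N$ in the decomposition $NN^\top$ have exactly $p$ columns. The argument that $W\ge 0$ together with $\gamma>0$ implies $X,Y>0$, though short, also deserves explicit mention since the hypothesis only assumes symmetry of these blocks.
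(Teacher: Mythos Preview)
Your proof is correct. The paper does not actually prove this lemma in the text (it is cited from \cite{Mazko-NDST-2017}), but the explicit construction in step~4 of Algorithm~\ref{a3-1} confirms that the intended argument is exactly the Schur-complement/factorization idea you use: factor $Y-\gamma^2X^{-1}=S^\top S\ge 0$ with $S\in\mathbb{R}^{p\times r}$ and build the extension from $S$. The only cosmetic difference is that the paper writes down $\widehat{X}$ directly (with $X_1=\gamma^{-1}SX$, $X_2=\gamma^{-2}SXS^\top+I_p$), whereas you build $\widehat{Y}$ first and set $\widehat{X}=\gamma^2\widehat{Y}^{-1}$; these yield different (but equally valid) extensions and are dual versions of the same computation.
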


 \begin{theorem}\label{t3-3}
Для системи {\rm (\ref{f3-1})} існує динамічний регулятор {\rm(\ref{f3-8})} порядку $p\le r$,
при якому замкнена система є допустимою і її критерій якості $J<\gamma$, тоді і лише тоді, коли сумісна щодо $X$ і $Y$ система співвідношень
{\rm(\ref{f2-6})}, {\rm(\ref{f3-4})}, {\rm(\ref{f3-5})} і {\rm(\ref{f3-9})}.
Матриці такого регулятора можна визначити у вигляді
\begin{equation}\label{f3-10}
\left[\begin{array}{cc}
                  K & U \\
                  V & Z \\
                \end{array}\right]=(I_{m+p}+\widehat{K}_0\widehat{D}_{22})^{-1}\widehat{K}_0,\quad
\widehat{D}_{22}=\left[\!\begin{array}{cc}
                  \bar{D}_{22} & 0_{l\times p} \\
                  0_{p\times m} & 0_{p\times p} \\
          \end{array}\!\right],
\end{equation}
де $\widehat{K}_0$ --- розв'язок ЛМН
\begin{equation}\label{f3-11}
 \widehat{L}^\top \widehat{K}_0\widehat{R}+\widehat{R}^\top \widehat{K}_0^\top \widehat{L}+\widehat{\Omega}<0,
\end{equation}
$$
\widehat{R}=\left[\begin{array}{ccc}
                \widehat{C}_2 & \widehat{D}_{21} & 0_{l+p\times k} \\
              \end{array}\right],\quad
\widehat{L}=\left[\begin{array}{ccc}
                \widehat{B}^\top_2\widehat{X} & 0_{m+p\times s} & \widehat{D}^\top _{12} \\
              \end{array}\right],$$
$$\widehat{\Omega}=\left[\begin{array}{ccc}
         \widehat{A}^\top \widehat{X}+\widehat{X}\widehat{A} & \widehat{X}\widehat{B}_1 & \widehat{C}^\top _1 \\
         \widehat{B}^\top _1\widehat{X} & -\gamma^2P& \widehat{D}^\top _{11} \\
         \widehat{C}_1 & \widehat{D}_{11} & -Q^{-1} \\
         \end{array}\right].$$
Блокова матриця $\widehat{X}$ в {\rm(\ref{f3-11})} сформована на основі леми {\rm \ref{l3-1}} згідно з {\rm(\ref{f3-7-1})}, де $X$ і $Y$ задовольняють співвідношення
{\rm(\ref{f2-6})}, {\rm(\ref{f3-4})}, {\rm(\ref{f3-5})} і {\rm(\ref{f3-9})}.
\end{theorem}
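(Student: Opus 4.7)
The plan is to view the dynamic controller \eqref{f3-8} as a static output feedback on an extended plant: together with the reduced ordinary system \eqref{f3-2}, the controller \eqref{f3-8} produces the augmented closed-loop system \eqref{f3-8-1} of dimension $r+p$ in which the four controller blocks $(K,U,V,Z)$ appear only through the single static gain $\widehat{K}_0$ acting on the augmented measurement. Since the non-degenerate transformation \eqref{f2-2} sends the descriptor plant \eqref{f3-1} to the ordinary system \eqref{f3-2} while preserving admissibility and the generalized performance index, and since the equality $\widehat{J}=J$ has already been noted after the description of \eqref{f3-8-1}, the synthesis problem reduces to finding $\widehat{K}_0$ that makes \eqref{f3-8-1} Hurwitz with $\widehat{J}<\gamma$.

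Applying Theorem \ref{t3-1} to the ordinary closed-loop system \eqref{f3-8-1}, viewed as a degenerate descriptor system with $E=I_{r+p}$, yields the existence of $(r+p)\times(r+p)$ matrices $\widehat{X},\widehat{Y}$ satisfying the augmented analogues of \eqref{f2-6}, \eqref{f3-4}, \eqref{f3-5}, \eqref{f3-6}. The key reduction step is to show that, owing to the block structure of $\widehat{A},\widehat{B}_i,\widehat{C}_i,\widehat{D}_{ij}$ displayed after \eqref{f3-8-1} — the controller-state rows of $\widehat{B}_2$ and columns of $\widehat{C}_2$ contain identity blocks while $\widehat{B}_1,\widehat{C}_1,\widehat{D}_{11}$ vanish on the controller-state channel — the annihilators $W_{\widehat{R}},W_{\widehat{L}}$ may be chosen so that the corresponding projected inequalities for the augmented plant decouple and collapse exactly to \eqref{f3-4} and \eqref{f3-5} written for the leading $r\times r$ blocks $X$ and $Y$ of $\widehat{X},\widehat{Y}$. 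Similarly, the initial-state constraint $\widehat{X}<\gamma^2\widehat{X}_0$ with $\widehat{X}_0=\operatorname{diag}(\bar H,0_{p\times p})$ reduces to $X<\gamma^2\bar H$, i.e.\ to \eqref{f2-6}, because the trailing zero block of $\widehat{X}_0$ together with $\widehat{X}>0$ permits only the leading block of $\widehat{X}$ to participate. Lemma \ref{l3-1} then converts the coupling $\widehat{X}\widehat{Y}=\gamma^2 I_{r+p}$ with prescribed leading blocks $X,Y$ into the rank condition \eqref{f3-9}.

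For the reconstruction of the controller, given $X,Y$ satisfying \eqref{f2-6}, \eqref{f3-4}, \eqref{f3-5}, \eqref{f3-9}, I would complete them to $\widehat{X},\widehat{Y}$ through \eqref{f3-7-1} as supplied by Lemma \ref{l3-1}; the inequality \eqref{f3-11} in $\widehat{K}_0$ is then linear, and its solvability is equivalent, via the elimination lemma underlying Theorem \ref{t3-1}, to the projected conditions just verified, so a solution $\widehat{K}_0$ exists. The original controller blocks $(K,U,V,Z)$ are then recovered from $\widehat{K}_0$ by inverting the well-posedness factor $I_{m+p}+\widehat{K}_0\widehat{D}_{22}$, which yields \eqref{f3-10}; the inversion is legitimate because $\widehat{D}_{22}$ has zero trailing diagonal block, so $I_{m+p}+\widehat{K}_0\widehat{D}_{22}$ is automatically invertible. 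The main technical obstacle I anticipate is precisely the first reduction: checking that the projected LMIs for the augmented plant collapse to \eqref{f3-4}–\eqref{f3-5} on the leading blocks, with no spurious extra constraints from the newly introduced controller channels. This depends on a deliberate choice of null-space bases aligned with the identity blocks in $\widehat{B}_2,\widehat{C}_2$, together with the block-zero pattern of $\widehat{A},\widehat{B}_1,\widehat{C}_1,\widehat{D}_{11}$ in the controller coordinates, which together guarantee that the augmented projected inequalities carry no information beyond the original ones.
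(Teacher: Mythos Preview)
Your plan coincides with the paper's own argument, which is little more than a one-line sketch: rewrite \eqref{f3-2} with the dynamic controller \eqref{f3-8} as the augmented ordinary plant in $\mathbb{R}^{r+p}$ with the \emph{static} output feedback $\widehat u=\widehat K_0\widehat y$, then invoke Theorem~\ref{t3-1} together with Lemma~\ref{l3-1}. Your identification of the collapse of the projected LMIs to \eqref{f3-4}--\eqref{f3-5} via the identity blocks in $\widehat B_2,\widehat C_2$ is exactly the mechanism at work.

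There is, however, one step that fails as you wrote it. With $\widehat X_0=\operatorname{diag}(\bar H,0_{p\times p})$ the inequality $\widehat X<\gamma^2\widehat X_0$ is \emph{infeasible} for any $\widehat X>0$ (its $(2,2)$ block would have to be negative), so it cannot ``reduce'' to \eqref{f2-6}. The correct reason the initial-state constraint becomes $X<\gamma^2\bar H$ is that the controller initial state is fixed at $\eta(0)=0$; hence in the Lyapunov estimate underlying Lemma~\ref{l2-1} only augmented initial vectors of the form $\widehat x_0=(\xi_{10},0)$ occur, and the required bound $\widehat x_0^{\top}\widehat X\widehat x_0<\gamma^2\widehat x_0^{\top}\widehat X_0\widehat x_0$ reads $\xi_{10}^{\top}X\xi_{10}<\gamma^2\xi_{10}^{\top}\bar H\xi_{10}$, i.e.\ exactly \eqref{f2-6} on the leading block $X$ of $\widehat X$, with no condition on the remaining blocks. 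A smaller point: the zero trailing block of $\widehat D_{22}$ makes $I_{m+p}+\widehat K_0\widehat D_{22}$ block lower triangular with diagonal blocks $I_m+K_0\bar D_{22}$ and $I_p$, so invertibility still hinges on $\det(I_m+K_0\bar D_{22})\ne 0$ --- the standard well-posedness condition --- and is not automatic.
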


Враховуючи структуру матриць в (\ref{f3-8-1}), систему (\ref{f3-2}) з динамічним регулятором (\ref{f3-8}) можна представити у вигляді системи у просторі $\mathbb{R}^{r+p}$ зі статичним регулятором:
$$
  \dot{\widehat{x}}=\widehat{A}\,\widehat{x}+\widehat{B}_1\,w+\widehat{B}_2\,\widehat{u}, \quad
    z=\widehat{C}_1\,\widehat{x}+\widehat{D}_{11}\,w+\widehat{D}_{12}\,\widehat{u}, \quad
    \widehat{y}=\widehat{C}_2\,\widehat{x}+\widehat{D}_{21}\,w,
$$
$$\widehat{x}=\left[\begin{array}{c}
      \xi_1 \\
      \eta \\
    \end{array}\right],\quad
        \widehat{y}=\left[\begin{array}{c}
      y-\bar{D}_{22}u \\
      \eta \\
    \end{array}\right],\quad \widehat{u}=\left[\begin{array}{c}
      u \\
      \dot{\eta} \\
    \end{array}\right],\quad \widehat{u}=\widehat{K}_0\widehat{y}.
$$
Тому теорему \ref{t3-3} можна встановити як наслідок теореми \ref{t3-1} і леми \ref{l3-1}.

Зазначимо, що теореми \ref{t3-1} і \ref{t3-3} без використання обмеження $X<\gamma^2\bar{H}$ дають критерії існування та методи побудови стабілізуючих регуляторів, що забезпечують оцінку $J_0<\gamma$ для відповідних замкнених систем. Умови теореми \ref{t3-3} у випадку $p = 0$ є критерієм існування статичного регулятора (\ref{f3-3}) із вказаними властивостями в теоремі \ref{t3-1}. Побудова динамічних регуляторів порядку
$p = r $, що задовольняють теорему \ref{t3-3}, зводиться до розв'язання системи ЛМН без додаткових обмежень. У цьому випадку рангове обмеження в (\ref{f3-9}) виконується автоматично.

\medskip
Наведемо алгоритм побудови динамічного регулятора (\ref{f3-8}), що задовольняє теорему \ref{t3-3}.

\begin{algorithm} \label{a3-1}\rm

$\;$\\
\noindent {\rm 1)} Обчислення матриць перетворення {\rm (\ref{f2-2})} і системи {\rm (\ref{f3-2})};

\noindent {\rm 2)} обчислення матриць $W_{\bar{R}}$ і $W_{\bar{L}}$, де $\bar{R}=\left[\begin{array}{cc}
                \bar{C}_2 & \bar{D}_{21} \\
              \end{array}\right]$, $\bar{L}=\left[\begin{array}{cc}
                \bar{B}_2^\top & \bar{D}_{12}^\top \\
              \end{array}\right]$;

\noindent {\rm 3)} знаходження матриць $X=X^\top >0$ і $Y=Y^\top >0$, які задовольняють співвідношення {\rm(\ref{f2-6})}, {\rm(\ref{f3-4})}, {\rm(\ref{f3-5})} і {\rm(\ref{f3-9})};

\noindent {\rm 4)} побудова розкладу $\Delta=Y-\gamma^2X^{-1}=S^\top S\ge 0$, де $S\in {\mathbb R}^{p\times r}$, ${\rm ker}\,S={\rm ker}\,\Delta$, та формування блокової матриці
 $$\widehat{X}=\left[\begin{array}{cc}
                  X&\!X_1^\top\\
                  X_1&\!X_2\\
                  \end{array} \right]>0,\quad X_1=\dfrac{1}{\gamma}SX,\quad X_2=\dfrac{1}{\gamma^2}SXS^\top +I_p;$$

\noindent {\rm 5)} розв'язання лінійної матричної нерівності {\rm (\ref{f3-11})} щодо $\widehat{K}_0$ за умови \\$\det (I_m+K_0\bar{D}_{22})\ne 0$;

\noindent {\rm 6)} обчислення матриць регулятора за формулою {\rm (\ref{f3-10})}.
\end{algorithm}

\medskip
Даний алгоритм може бути реалізований, наприклад, засобами системи MATLAB. Якщо в п. 4 алгоритму $\Delta=0$, тобто ${\rm rank}\,W=r$, то, розв'язуючи ЛМН (\ref{f3-7}), отримаємо статичний регулятор (\ref{f3-3}), що задовольняє теорему \ref{t3-1}.

\begin{remark}\label{r2-2}
Якщо пара матриць \{$E,A$\} у системі (\ref{f3-3}) є імпульною, то керування для системи (\ref{f3-1}) можна шукати у вигляді $u=K_1y+v$, де $K_1$ --- матриця допоміжного статичного регулятора за умов (\ref{f3-6-6}), а $v$ --- нове керування, яке створює динамічний регулятор
$$ \dot{\eta}=Z\eta+Vy,\quad v=U\eta+Ky,\quad \eta(0)=0,$$
і яке розв'язує поставлену задачу для звичайної системи (\ref{f3-2-1}), побудованої на основі еквівалентного перетворення системи (\ref{f3-6-7}) (див. попередній підрозділ). В результаті замкнена дескрипторна система у розширеному фазовому просторі має вигляд
\begin{equation}\label{f3-12}
 \widehat{E}\dot{\widehat{x}}=\widehat{A}_0\widehat{x}+\widehat{B}_0w,\quad z=\widehat{C}_0\widehat{x}+\widehat{D}_0w,\quad \widehat{x}(0)=\widehat{x}_0,
\end{equation}
де
$$\widehat{E}=\left[\begin{array}{cc}
                         E & 0 \\
                        0 & I_p \\
                  \end{array}\right],\quad
\widehat{x}=\left[\begin{array}{c}
      x \\
      \eta \\
    \end{array}\right],\quad
\widehat{x}=\left[\begin{array}{c}
      x_0 \\
      0 \\
    \end{array}\right],
    $$
$$\widehat{A}_0=\!\left[\!\begin{array}{cc}
                     A+B_2(K_{10}C_2\!+\!K_{11}G_1) & B_2K_{11}G_2 \\
                    V(\bar{C}_2E_2^\top\!+\!\bar{D}_{22}G_1) & \!\!\!Z+V\bar{D}_{22}G_2 \\
                   \end{array}\!\right],
 \widehat{B}_0=\!\left[\!\begin{array}{c}
                   B_1+B_2(K_{10}D_{21}\!+\!K_{11}G_3) \\
                   V(\bar{D}_{21}+\bar{D}_{22}G_3) \\
                 \end{array}\!\right],
$$
$$\widehat{C}_0=\left[\!\begin{array}{cc}
                    C_1+D_{12}(K_{10}C_2\!+\!K_{11}G_1) & D_{12}K_{11}G_2 \\
                  \end{array}\!\right],\; \widehat{D}_0=D_{11}+D_{12}(K_{10}D_{21}+K_{11}G_3),
$$
$$G_1=K_0\bar{C}_2E_2^\top,\;\; G_2=(I_m-K\bar{D}_{22})^{-1}U,\;\;  G_3=K_0\bar{D}_{21},\;\;  K_0=(I_m-K\bar{D}_{22})^{-1}K.$$
\end{remark}

\medskip

\textbf{4. Приклад.}
Розглянемо лінеаризовану модель гідравлічної системи з трьома послідовно сполученими резервуарами, яка описується у вигляді дескрипторної системи керування (\ref{f3-1}) з такими матрицями \cite{Araujo-2012}:
$$
E=\left[\!\begin{array}{ccc}
      1 & 0 & 0 \\
      0 & 1 & 0 \\
      0 & 0 & 0 \\
    \end{array}\!\right],\,
A=\left[\!\begin{array}{ccc}
      \!-k_1 & 0 & 0 \\
      k_1 & \!\!-k_2 & 0 \\
      1 & 1 & 1 \\
    \end{array}\!\right],\,
B_1=\left[\!\begin{array}{cc}
       1 & 0 \\
       0 & 0 \\
       0 & 0 \\
     \end{array}\!\right],\,
 B_2=\left[\!\begin{array}{c}
       1 \\
       0 \\
       0 \\
     \end{array}\!\right],
$$
$$
C_1=\left[\begin{array}{ccc}
       0 & 0 & 1 \\
     \end{array}\right],\quad
D_{11}=0_{1\times 2},\quad D_{12}=1,
$$
$$
C_2=\left[\begin{array}{ccc}
       1 & 0 & 0 \\
       0 & 1 & 0 \\
     \end{array}\right],\quad
D_{21}=\left[\begin{array}{cc}
         0 & 0 \\
         0 & 1 \\
     \end{array}\right],
         \quad D_{22}=0_{2\times 1}.
$$

\begin{figure}[h]
  \centering
  \includegraphics[scale=0.3]{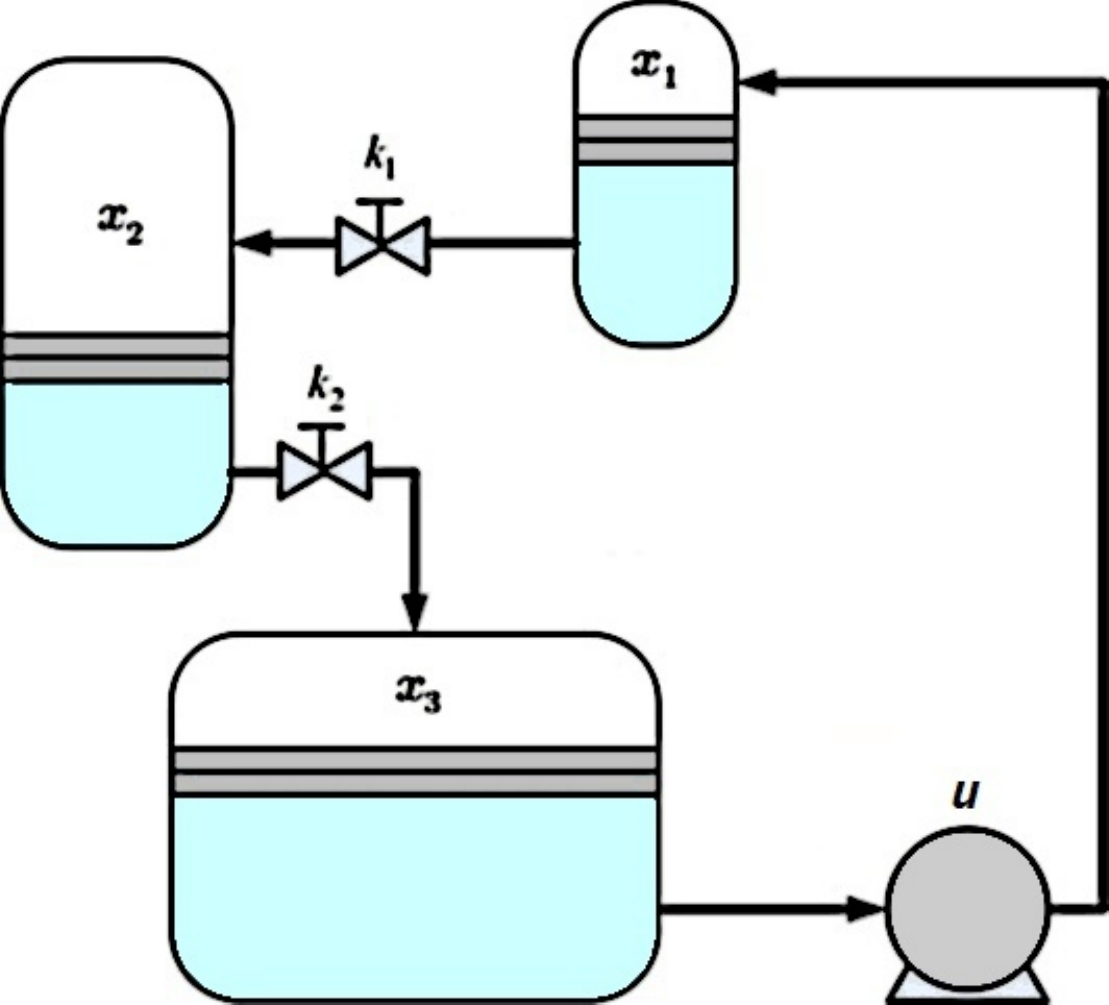}
  \caption{Гiдравлiчна система з трьома резервуарами.}\label{fig-1}
\end{figure}

Компоненти вектора стану $x=\left[\begin{array}{ccc}
                                x_1 & x_2 & x_3 \\
                              \end{array}\right]^\top$
визначають рівні рідини у відповідних резервуарах, вектор $w=\left[\begin{array}{cc}
                                w_1 & w_2 \\
                              \end{array}\right]^\top$
формують неконтрольоване збурення $w_1$ й похибка $w_2$ у вимірах $y=\left[\begin{array}{cc}
                                x_1 & x_2+w_2 \\
                              \end{array}\right]^\top$,
керований вихід $z=x_3+u$, а роль керування $u$, що регулює рівні рідини у перших двох резервуарах, виконує дебіт (потік) рідини в насосі із третього резервуара в перший (рис. \ref{fig-1}).

Виберемо вагові матриці критерію якості (\ref{f1-2}) і допустимі значення параметрів
$$P={\rm diag}\{2,1\},\; Q=1,\; X_0={\rm diag}\{3,2,0\},\; H={\rm diag}\{3,2,1\},\; k_1=1,\; k_2=1,5.$$

У даному прикладі $n=3$, $m=1$, $k=1$, $s=2$, $l=2$, $r=2$, пара матриць ($E,A$) допустима, а трійки матриць ($E,A,B_2$) і ($E,A,C_2$)  --- відповідно $I$-керована та $I$-спостережувана. Система без керування має критерій якості $J=1,17851$.

Для системи (\ref{f3-1}) на основі теореми \ref{t3-1} при $\gamma=1$ знайдено матрицю статичного регулятора (\ref{f3-3})
$$K=\left[\begin{array}{cc}
           -0,67954	&   -0,39810\\   	
    \end{array}\right],$$
при якому замкнена система є допустимою і її критерій якості $J=0,97989<\gamma$. При цьому її скінченний спектр збігається з
$\sigma(A_*)=\big\{-1,58977\pm  0,62453\,i\big\}$, де $A_*$ --- матриця системи (\ref{f3-3-1}).

 \begin{figure}[h!]
 \begin{minipage}[h]{62mm}
 \begin{center}
 \vspace*{-5mm}
  \includegraphics[scale=0.46]{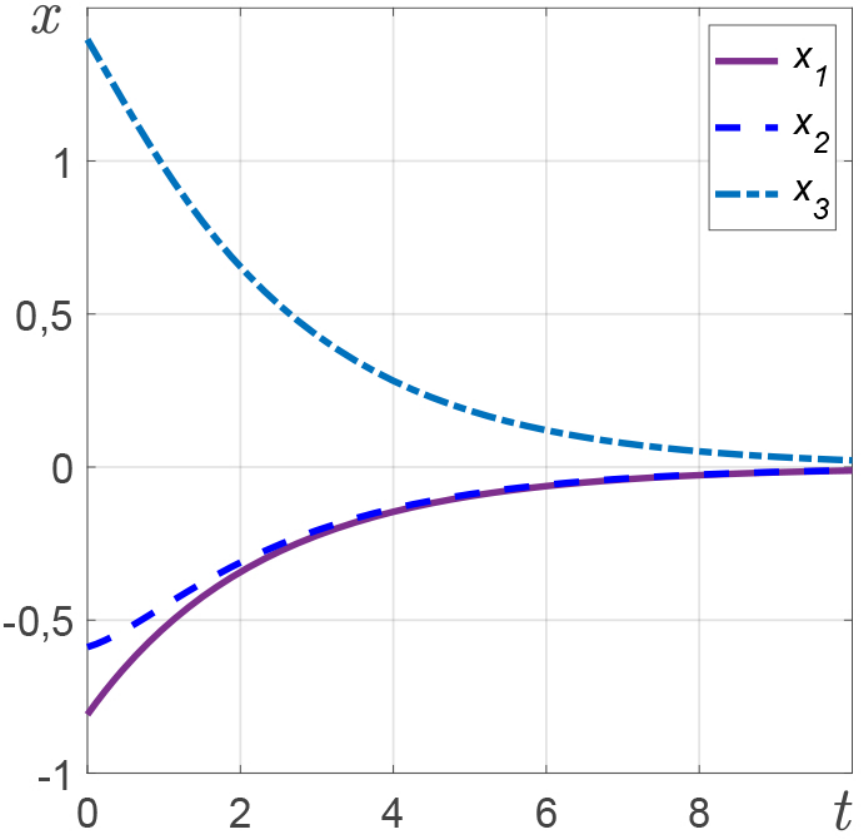}
 \caption{\small  Поведінка замкненої системи.}
 \label{fig-2}
 \end{center}
 \end{minipage}
 \hspace*{15mm}
 \begin{minipage}[h]{62mm}
 \begin{center}
 \includegraphics[scale=0.46]{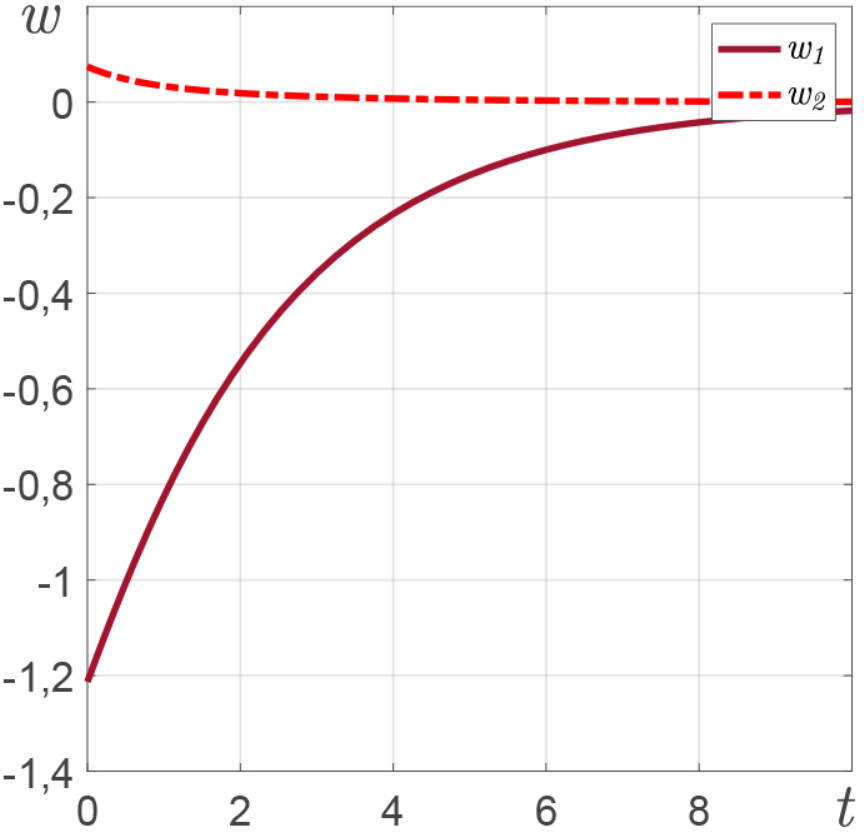}
\caption{\small Найгірше збурення щодо \\критерію якості $J$.}
 \label{fig-3}
 \end{center}
 \end{minipage}
 \end{figure}

Застосовуючи лему \ref{l2-2} для замкненої системи, знайдено найгірше збурення (\ref{f2-11}) і найгірший початковий вектор (\ref{f2-8}) стосовно критерію якості $J$:
\begin{equation}\label{f4-1}
w=\bar{K}_*\xi_1,\quad \bar{K}_*=\left[\begin{array}{cc}
    1,12785	 &   0,51161\\	
   -0,24149	 &   0,20639\\	
      \end{array}\right],
\end{equation}
\begin{equation}\label{f4-2}
x_0=\left[\begin{array}{ccc}
                     -0,80868 & -0,58825 & 1,39693 \\
                   \end{array}\right]^\top.
\end{equation}
На рис. \ref{fig-2} зображена поведінка розв'язку замкненої системи за найгірших умов (\ref{f4-1}) і (\ref{f4-2}),
 а на рис. \ref{fig-3} ---  вектор-функція (\ref{f4-1}) найгіршого збурення.

 Далі, застосовуючи алгоритм \ref{a3-1}, для системи (\ref{f3-1}) знайдено матриці наближеного $J$-опти\-мального динамічного регулятора (\ref{f3-8}) порядку $p=r$
 $$
 Z=\left[\begin{array}{cc}
       -0,62278 & 0,00590 \\
       -0,00004 & -0,92267 \\
     \end{array}\right],\quad
 V=\left[\begin{array}{cc}
       0,00047 & 0,00025 \\
       -0,00126 & 0,00113 \\
     \end{array}\right],
 $$
 $$
 U=\left[\begin{array}{cc}
       -0,00152 & -0,00021 \\
     \end{array}\right],\quad
 K=\left[\begin{array}{cc}
      -1,09088 & -0,04544 \\
    \end{array}\right],
 $$
при якому замкнена система є допустимою, має скінченний спектр
$$\big\{-1,99999,\; -1,59089,\; -0,92267,\; -0,62278\big\}$$
і мінімальне значення критерію якості $J=0,97895$.

\medskip
\textbf{5. Висновки.} Розроблено конструктивні методи оцінки та досягнення бажаного рівня гасіння зовнішніх і початкових збурень у дескрипторних системах керування за допомогою статичних та динамічних регуляторів. Практична реалізація даних методів базується на еквівалентному перетворенні дескрипторних систем і застосуванні відомих методів теорії $H_\infty$-керування для звичайних систем меншого порядку. Так, умови існування і алгоритми побудови динамічного регулятора порядку $p={\rm rank}\,E$, при якому замкнена система є допустимою зі зваженими критеріями якості $J_0<\gamma$ або $J<\gamma$, зводяться до розв'язування систем ЛМН без додаткових рангових обмежень. Якщо вихідна дескрипторна система не є неімпульсною, то запропоновано пошук додаткового керування, яке забезпечує вказану властивість даної системи. Еквівалентне перетворення дескрипторної системи до звичайної застосовано також при знаходженні векторів найгірших зовнішніх і початкових збурень щодо зважених критеріїв якості. Вивчення поведінки замкненої системи за таких найгірших умов може бути важливим при конструюванні і випробуванні реальних керованих об'єктів.

\medskip

{\small

\renewcommand\refname{\normalsize Література}

}

\newpage

\UDC{УДК 517.925.51;\,681.5.03}

\Title{Еквівалентне перетворення і зважена $H_\infty$-оптимізація лінійних дескрипторних систем}


\Author{\large О.\,Г.~Мазко}

\Address{Інститут математики НАН України, Київ}

\vspace{1mm}

\Abstract{Досліджується проблема узагальненого $H_\infty$-керування для класу допустимих дескрипторних систем з ненульовим початковим вектором.
Використовується узагальнений критерій якості, що характеризує зважений рівень гасіння зовнішніх і початкових збурень.
Пропонується невироджене перетворення системи, яке дозволяє застосовувати відомі методи оцінки та досягнення бажаних критеріїв якості для звичайних систем меншого порядку. Наводиться числовий приклад керованої гідравлічної системи з трьома резервуарами.}

%
%
%
%
%

\vspace{5mm}

\Title{Equivalent transformation and weighted $H_\infty$-optimization of linear descriptor systems}

\Author{\large A.\,G.~Mazko}

\Address{Institute of Mathematics, Ukrainian Academy of Sciences,
Kyiv}

\vspace{1mm}

\Abstract{The problem of a generalized type of $H_\infty$-control is investigated for a class of admissible descriptor systems with a non-zero initial vector. A generalized performance measure is used, which characterizes the weighted damping level of external and initial disturbances.
A non-degenerate transformation of the system is proposed, which allows to apply known methods for evaluation and the achievement of desired performance measures for conventional lower-order systems. A numerical example of a controlled hydraulic system with three tanks is given.}

{\small Mathematics Subject Classification (2010):
93C35, 
34D20, 
37N35. 

\newpage

\begin{center}
{\Large\bf Відомості про автора.}
\end{center}

\textbf{\emph{МАЗКО Олексій Григорович}}, завідувач відділу Інституту
математики НАН України, член-кореспондент НАН України, доктор физ.- мат. наук, професор.

Адреса для листування:  Україна, 01024 Київ-4, вул. Терещенківська, 3,
Інститут математики НАН України.

E-mail: mazkoag@gmail.com,  mazko@imath.kiev.ua.

С. т.: (044)234-51-50; моб. (093)238-55-70.

\end{document}